\newtheorem{theorem}{Theorem}
\theoremstyle{plain}
\newtheorem{lemma}{Lemma}
\newtheorem{remark}{Remark}
\numberwithin{equation}{section}
\begin{document}

\begin{center}
\thispagestyle{empty}
\markboth{\textbf{ }}
{\textbf{Some analysis on a fractional differential equation}}

\noindent {\Large \textbf{Some analysis on a fractional differential equation with discontinuous right-hand side \\[0pt]}}

\bigskip

\textbf{M\"ufit \c{S}AN$^{1},$ U\u{g}ur SERT$^{2}$}
\end{center}

\vspace{.03cm}

\begin{center}
\noindent \textit{ 1 Department of Mathematics, Faculty of Science, Çank{\i}r{\i} Karatekin University, Tr-18100, Çank{\i}r{\i}, Turkey\\
e-mail: mufitsan@karatekin.edu.tr} \\

\textit{2 Department of Mathematics, Faculty of Science, Hacettepe University, Beytepe, Tr-06800,  Ankara, Turkey, \\ e-mail: usert@hacettepe.edu.tr}
\end{center}

\medskip

\vspace{.4cm}

\noindent\textbf{Abstract} In this article, we consider an initial
value problem for a nonlinear differential equation with
Riemann-Liouville fractional derivative. By proposing a new
approach, we prove local existence and uniqueness of the solution
when the nonlinear function on the right hand side of the equation under consideration is not continuous on $[0,T]\times\mathbb{R}.$ \\

\noindent{\bf 2010 Mathematics Subject Classification}:  Primary: 34A08; Secondary: 37C25, 34A12, 74G20, 26A33 \\

\medskip

\noindent{\bf Key Words}: Fractional differential equations; mean value theorem;
Nagumo-type uniqueness; Peano-type existence theorem.
\bigskip

\section{\noindent Introduction and Motivation}

The birth of fractional calculus dates back to the early days of
differential calculus. The number of researchers working on
fractional calculus were inadequate when compared to researchers
in differential calculus studies, eventually there had been no
progress on this field in a reasonable amount of time. However,
the interest in fractional calculus and fractional differential
equations has increased considerably for the last three decades.
This has lead to a rapidly development in fractional calculus by
virtue of the techniques, methods and results used in the ordinary
differential calculus. Evidently, a substantial part of the
interest in this subject derives from initial-value problems
(IVPs) and boundary-value problems (BVPs) for the fractional
differential equations with fractional derivatives such as
Riemann-Liouville (RL), Caputo, Caputo-Fabrizio,
Gr\"{u}nwald-Letkinov etc. Existence and uniqueness of solution
for the IVPs and BVPs were investigated by many researchers (see
for example \cite{Bai}, \cite{Del}, \cite{Diethelm2}, \cite{Fer},
\cite{Heymans}-\cite{Laks2}, \cite{Samko}-\cite{Zhang}). These
articles deals with the qualitative properties of equations with
continuous right-hand side. However, in this paper, we address an
equation with Riemann-Liouville derivative which has discontinuous
right-hand side. Considering the coincidence of first order RL
derivative with ordinary derivative (see \cite{Pod}-\cite{Samko}),
real-world applications of an equation with RL derivative which
has discontinuous right-hand side would be enlightening from many
problems arising from mechanics, electrical engineering and the
theory of automatic control. Differential equations with
discontinuous right-hand sides, in particular with a function
$f(x,t)$ which is discontinuous in $x$ and continuous in $t$ were
studied widely in the literature. For these studies, we can refer
the book of Filippov \cite{Filip}(also the references cited
therein), which is accepted as a basic source for the general
theory of discontinuous dynamical systems.

In this paper, we investigate the following initial value problem
for a differential equation with RL fractional derivative:
\begin{equation*}
\begin{cases}
&D^{a}u(x) = f\big(x,u(x)\big), \ \ x>0   \tag{1.1}  \\
&u(0)=u_{0},\ \       \\
\end{cases}
\end{equation*}
where $0<a<1$ (is valid throughout the paper), $u_{0}\neq 0$ is a
real number and the function $f$ will be specified later. The
operator, $D^{a}$ represents RL fractional derivative of order
$a,$ which is defined by combining the ordinary derivative and RL
fractional integral $I^{a}$ as follows:
\begin{align*}
D^{a}u\left(x\right):= \frac{d}{dx} \left[_{x}I^{1-a}u\right] \ \ \text{with} \  \ I^{a}u\left(x\right):=\frac{1}{\Gamma \left(a\right)}
\int_{0}^{x}\frac{u\left(\xi\right)}{\left(x-\xi\right)^{1-a}}d\xi.
\end{align*}
Here, $\Gamma(.)$ is the well-known Gamma function.
\par Problem (1.1) with continuous right-hand side was first discussed in
\cite{Laks1} and it was claimed that the continuous solution of
the problem exists on the interval $[0,T]$. Nevertheless, Zhang
\cite{Zhang} gave an example which indicates that the initial
condition $u(0)=u_{0}$ (except $u_{0}=0$) is not suitable for
investigating the existence of continuous solution of (1.1),
whenever the function $f$ is continuous on
$[0,T]\times\mathbb{R}.$ Accordingly, \c{S}an \cite{San2} considered problem (1.1) with $f$ which satisfies the following conditions:\\

(1.2) $f(x,t)$ and $x^{a}f(x,t)$ are continuous on $(0,T]\times\mathbb{R}$ and $[0,T]\times\mathbb{R},$ respectively,

(1.3) $x^{a}f(x,u_{0})\big|_{x=0}=u_{0}/\Gamma(1-a).$ \\

\noindent In \cite{San2}, it is proved that the condition (1.3) is
necessary for the existence of the continuous solution of problem
(1.1). Author also gave a partial answer to the question of the
existence of continuous solutions of (1.1). Problem (1.1)
represents a system and the initial condition must be independent
of tools we analyze. Based on this view, after a discussion with
Manuel D. Ortigueira (see also \cite{ort1},\cite{ort2}) we draw a
conclusion that it would be more accurate to discuss the
nonexistence of a continuous solution of (1.1) instead of querying
the suitability of the initial condition. In fact, if there were a
continuous solution $u$ of (1.1) when $f$ is continuous on
$[0,T]\times\mathbb{R},$ then by using the compositional relation
$u\left(x\right)=I^{a}D^{a}u(x)$ proved in Proposition 2.4 in
\cite{Del} and  $u\in C[0,T],$ $D ^{a}u\in C[0,T],$ it would be
shown that
\begin{align*}
u\left(x\right)=\frac{1}{\Gamma \left(a\right)}\int_{0}^{x}
\frac{\ f\left(\xi ,u\left(\xi\right)\right)}{\left(x-\xi\right)
^{1-a}}d\xi, \ \ \ x\in\left[0,T\right]. \tag{1.4}
\end{align*}
By the continuity of $f(x,u(x))$ on $\left[0,T\right]$ we obtain,

$$0\neq u_{0}=\lim_{x\to 0^{+}}u(x)=\frac{1}{\Gamma(a)}\int^{1}_{0}\frac{\lim_{x\to 0^{+}}x^{a}f(xt,u(xt))}{(1-t)^{1-a}}d\xi=0,$$
which is a contradiction. This implies that problem (1.1) with
continuous right-hand side and initial condition $u(0)=u_{0}\neq
0$ does not have a continuous solution $u(x)$.  \\
In the following section, under the conditions (1.2)-(1.3), we
first prove the existence of continuous solutions of (1.1) by
using Leray-Schauder alternative. However, as seen in the sequel,
this theorem does not inform us about the existence interval of
the solution. In the literature, there are some researchers (e.g.
\cite{Bal}, \cite{Eloe} and references therein) who worked on the
existence interval and maximal existence interval of the solution
for some fractional differential equations. For example, Mustafa
and Baleanu \cite{Bal} presented a method together with
Leray-Schauder alternative to obtain a better estimate for the
existence interval of the continuous solution of a problem they
considered. Such an approach cannot be applied directly to the
analysis of (1.1) hence, developing a new technique and using
Schauder's fixed point theorem, we attain a Peano-type existence
theorem for (1.1) which explicitly shows the existence interval of the solution. \\
For the uniqueness of problems similar to (1.1), there exist some
Nagumo-type uniqueness results (see, for example, \cite
{Diethelm2}, \cite{Fer}, \cite{Laks2}, \cite{Sin}) which were
proved by the technique and approach developed by Diaz
\cite{Diaz}. Besides the Nagumo-type conditions, for functions
satisfying certain conditions Diethelm \cite{Diethelm2} and Odibat
\cite{Odibat} employed some fractional mean value theorems to
establish uniqueness results.  We verify the existence and
uniqueness of the continuous solution of (1.1) when the function
$f$ additionally fulfills a Nagumo-type condition which is similar
to the Lipschitz-type condition used in Theorem 3.5 in \cite{Del}.
Here, we present a novel technique combining with a fractional
mean value theorem for functions $u\in C([0,T])$ satisfying
$D^{a}u\in C(0,T]$ and $x^{a}D^{a}u\in C[0,T]$ to prove the
uniqueness of (1.1).

\section {Preliminaries and Main Results}

Before proceeding to investigate problem (1.1), we remind some
basic facts from functional analysis which are required for the
main results. At first, we give a lemma which shows the
equivalence of the solutions of problem (1.1) and solutions of the
integral equation (1.4) \cite{San2}.

\begin{lemma}  Under conditions (1.2)-(1.3), the continuous solutions of (1.1) are also the solutions
of the integral equation (1.4), vice versa.
\end{lemma}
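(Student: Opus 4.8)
The plan is to prove the equivalence in both directions. The key facts I would use are the compositional relations for the Riemann-Liouville operators together with conditions (1.2)-(1.3), which guarantee the integrals below are well-defined despite the possible singularity of $f$ at $x=0$.

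First I would prove the forward direction: suppose $u\in C[0,T]$ solves (1.1), so that $D^{a}u(x)=f(x,u(x))$ for $x>0$. Applying the fractional integral $I^{a}$ to both sides, I would invoke the compositional relation $I^{a}D^{a}u(x)=u(x)-\frac{\bigl[{}_{x}I^{1-a}u\bigr]\big|_{x=0}}{\Gamma(a)}x^{a-1}$ (or the version $u(x)=I^{a}D^{a}u(x)$ from Proposition 2.4 in \cite{Del}, valid under the present regularity). The initial condition $u(0)=u_{0}$ forces $u$ to remain bounded at $0$, which controls the boundary term; combined with condition (1.3), the $x^{a-1}$-term vanishes and I am left with
\begin{align*}
u(x)=\frac{1}{\Gamma(a)}\int_{0}^{x}\frac{f(\xi,u(\xi))}{(x-\xi)^{1-a}}\,d\xi,
\end{align*}
which is exactly (1.4). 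Here condition (1.2), namely that $x^{a}f(x,t)$ is continuous on $[0,T]\times\mathbb{R}$, is what makes the integrand integrable near $\xi=0$ so the right-hand side is a genuine continuous function.

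For the converse I would start from a continuous $u$ satisfying (1.4), i.e. $u=I^{a}f(\cdot,u(\cdot))$. Applying $D^{a}=\frac{d}{dx}\,{}_{x}I^{1-a}$ and using $D^{a}I^{a}g=g$ for integrable $g$ recovers $D^{a}u(x)=f(x,u(x))$ for $x>0$, giving the equation in (1.1). To recover the initial value, I would compute $\lim_{x\to 0^{+}}u(x)$ directly from (1.4): the substitution $\xi=xt$ turns the integral into $\frac{1}{\Gamma(a)}\int_{0}^{1}\frac{(xt)^{a}f(xt,u(xt))}{(1-t)^{1-a}}\,\frac{dt}{(xt)^{a}}\cdot\ldots$, and the continuity of $x^{a}f(x,t)$ at $x=0$ together with (1.3) evaluates the limit to $u_{0}$, so $u(0)=u_{0}$.

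The main obstacle, and the only place real care is needed, is the handling of the singularity at $x=0$ and the boundary term produced by the composition $I^{a}D^{a}$. The RL derivative does not satisfy $I^{a}D^{a}u=u$ in general — there is a memory term of the form $cx^{a-1}$ — so the heart of the argument is showing that conditions (1.2)-(1.3) are precisely what kill this term and simultaneously make both integral representations converge; I would lean on the computation already displayed in the introduction (equation (1.4) and the limit computation following it) as the template, since the nonexistence argument there is exactly the delicate limit I must re-run in the affirmative under (1.3).
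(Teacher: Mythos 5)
The paper itself offers no proof of this lemma (it is imported wholesale from \cite{San2}), so your argument has to stand on its own merits. Your forward direction essentially does: for a continuous solution $u$ of (1.1), condition (1.2) makes $D^{a}u=f(\cdot,u(\cdot))=O(x^{-a})$ integrable, and it is the boundedness of the continuous $u$ (not condition (1.3), as your phrasing suggests) that forces $I^{1-a}u(x)\to 0$ as $x\to 0^{+}$ and hence kills the memory term $\frac{[I^{1-a}u](0)}{\Gamma(a)}x^{a-1}$; after that (1.4) follows as you say. (Strictly, Proposition 2.4 of \cite{Del} assumes $D^{a}u$ continuous on the closed interval, which fails here, so you must use the $L^{1}$-version with the memory term --- the first formula you wrote --- rather than the cited proposition.)

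The converse direction, however, contains a genuine gap at the one step you yourself single out: recovering $u(0)=u_{0}$. Write $F(x,t):=x^{a}f(x,t)$, continuous on $[0,T]\times\mathbb{R}$ by (1.2). Your substitution $\xi=xt$ and dominated convergence give, for any continuous $u$ satisfying (1.4) on $(0,T]$,
\begin{equation*}
u(0)=\lim_{x\to 0^{+}}u(x)=\frac{F\big(0,u(0)\big)}{\Gamma(a)}\int_{0}^{1}t^{-a}(1-t)^{a-1}\,dt=\Gamma(1-a)\,F\big(0,u(0)\big),
\end{equation*}
which is a fixed-point equation for the unknown number $u(0)$. Condition (1.3) says only that $t=u_{0}$ is \emph{one} root of $t=\Gamma(1-a)F(0,t)$; invoking (1.3) to ``evaluate the limit to $u_{0}$'' presupposes $u(0)=u_{0}$ and is therefore circular, and (1.2)--(1.3) do not make this root unique. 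The failure is concrete, using the paper's own Example 1 in the limit $\beta\to 0$: for $f(x,t)=\frac{x^{-a}t}{\Gamma(1-a)}$ and $u_{0}=1$, conditions (1.2)--(1.3) hold, and since $\frac{1}{\Gamma(a)}\int_{0}^{x}\xi^{-a}(x-\xi)^{a-1}\,d\xi=\Gamma(1-a)$, \emph{every} constant function $u\equiv c$ satisfies (1.4) for $x>0$, while only $u\equiv 1$ satisfies the initial condition. So a continuous solution of the integral equation need not solve (1.1), and no limit computation can repair this. The lemma is correct only under the reading that a ``solution of (1.4)'' is a continuous function which in addition takes the value $u_{0}$ at $x=0$, with (1.4) imposed for $x>0$ (the reading consistent with \cite{San2} and with how the operator $\mathcal{M}$ is used later); your proof needs to state this convention explicitly, and under it the converse reduces to your application of $D^{a}I^{a}=\mathrm{id}$, with nothing left to prove about the initial value.
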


Let us define the operator $\mathcal{M}:C([0,T])\mapsto C([0,T])$
associated with integral equation (1.4) as follows:
\begin{align*}
\mathcal{M}u\left(x\right):=\frac{1}{\Gamma
\left(a\right)}\int_{0}^{x} \frac{\ f\left(\xi
,u\left(\xi\right)\right)}{\left(x-\xi\right) ^{1-a}}d\xi, \ \ \
x\in\left[0,T\right]. \tag{2.1}
\end{align*}

Since the fixed points of operator $\mathcal{M}$ coincide with the
solutions of integral equation (1.4), our goal is to find out the
fixed points of operator $\mathcal{M}$ by applying the following
fixed point theorems \cite{Deim},\cite{Drabek},\cite{Dug}:

\begin{theorem} (Schauder's fixed-point theorem) Let $X$ be a real Banach space, $B\subset X$ nonempty closed bounded
and convex, $\mathcal{M}: B\to B$ compact. Then, $\mathcal{M}$ has
a fixed point.
\end{theorem}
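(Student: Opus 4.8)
The plan is to deduce this infinite-dimensional statement from the finite-dimensional Brouwer fixed-point theorem by means of finite-dimensional Schauder approximations. Since $\mathcal{M}$ is compact, the set $K := \overline{\mathcal{M}(B)}$ is a compact subset of $B$, using that $B$ is closed and $\mathcal{M}(B)\subset B$. Compactness of $K$ means it is totally bounded, so for each $\varepsilon>0$ there is a finite $\varepsilon$-net $\{y_{1},\ldots,y_{n}\}\subset K$; that is, every $y\in K$ lies within distance $\varepsilon$ of some $y_{i}$.

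Next I would introduce the Schauder projection onto the convex hull of the net. For $y\in K$ set $m_{i}(y):=\max\{0,\varepsilon-\|y-y_{i}\|\}$; at least one $m_{i}(y)$ is positive, so $\mu(y):=\sum_{i=1}^{n}m_{i}(y)>0$ and I can define
\[
P_{\varepsilon}(y):=\frac{1}{\mu(y)}\sum_{i=1}^{n}m_{i}(y)\,y_{i}.
\]
This map is continuous and takes values in the convex hull $C:=\operatorname{conv}\{y_{1},\ldots,y_{n}\}$, which, since each $y_{i}\in K\subset B$ and $B$ is convex, satisfies $C\subset B$. The key estimate is $\|P_{\varepsilon}(y)-y\|\le\varepsilon$ for all $y\in K$: indeed $P_{\varepsilon}(y)-y$ is a convex combination of the vectors $y_{i}-y$, and only those $y_{i}$ with $\|y_{i}-y\|<\varepsilon$ carry nonzero weight.

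The core of the argument is then the reduction to Brouwer. The set $C$ is a compact convex subset of the finite-dimensional subspace spanned by $y_{1},\ldots,y_{n}$, and the map $\mathcal{M}_{\varepsilon}:=P_{\varepsilon}\circ\mathcal{M}$ sends $C$ continuously into $C$. By Brouwer's fixed-point theorem there is $x_{\varepsilon}\in C$ with $\mathcal{M}_{\varepsilon}(x_{\varepsilon})=x_{\varepsilon}$, whence
\[
\|x_{\varepsilon}-\mathcal{M}(x_{\varepsilon})\|=\|P_{\varepsilon}(\mathcal{M}(x_{\varepsilon}))-\mathcal{M}(x_{\varepsilon})\|\le\varepsilon,
\]
since $\mathcal{M}(x_{\varepsilon})\in K$. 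Taking $\varepsilon=1/k$ produces a sequence $(x_{k})$ with $\|x_{k}-\mathcal{M}(x_{k})\|\to 0$.

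Finally I would pass to the limit. As $\mathcal{M}(x_{k})\in K$ and $K$ is compact, a subsequence $\mathcal{M}(x_{k_{j}})$ converges to some $z\in K\subset B$; then $x_{k_{j}}\to z$ as well, because $\|x_{k_{j}}-\mathcal{M}(x_{k_{j}})\|\to 0$. Continuity of $\mathcal{M}$ gives $\mathcal{M}(x_{k_{j}})\to\mathcal{M}(z)$, and comparing the two limits yields $\mathcal{M}(z)=z$. I expect the main obstacle to be the construction and verification of the Schauder projection — in particular the approximation estimate $\|P_{\varepsilon}(y)-y\|\le\varepsilon$ — together with the (here assumed) appeal to Brouwer's theorem, which supplies the genuinely topological content of the result.
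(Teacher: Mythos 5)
The paper itself gives no proof of this theorem: it is quoted as a classical result with citations to Deimling, Dr\'{a}bek--Fonda, and Dugundji--Granas, and is then used as a black box (directly in Theorem~\ref{Thm1}, and indirectly through the Leray--Schauder alternative). Your argument is the standard textbook proof from precisely those sources --- Schauder projection onto the convex hull of a finite $\varepsilon$-net, reduction to Brouwer's fixed-point theorem for compact convex sets in finite dimensions, and a limiting argument along $\varepsilon = 1/k$ --- and it is essentially correct, so it supplies exactly the content the paper delegates to the references. Two small points deserve care. First, total boundedness of $K=\overline{\mathcal{M}(B)}$ should be invoked in the form that the \emph{open} balls of radius $\varepsilon$ centered at $y_{1},\dots,y_{n}$ cover $K$; otherwise a point $y$ with $\min_{i}\|y-y_{i}\|=\varepsilon$ would give $\mu(y)=0$ and leave $P_{\varepsilon}(y)$ undefined (your phrase ``lies within distance $\varepsilon$'' is ambiguous on this). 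Second, you use continuity of $\mathcal{M}$ twice --- once so that $\mathcal{M}_{\varepsilon}=P_{\varepsilon}\circ\mathcal{M}$ is a continuous self-map of $C$, and once to pass to the limit $\mathcal{M}(x_{k_{j}})\to\mathcal{M}(z)$ --- so you should state explicitly that ``compact'' here means continuous with relatively compact image; this is indeed the convention of the paper and of the cited references, but your proof silently depends on it. With these clarifications the argument is complete.
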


\begin{remark} In applications, it is usually too difficult or impossible to establish a set $B$ so that
$\mathcal{M}$ takes $B$ back into $B$ (see, \cite{Drabek}).
Therefore, it will be available to consider operator $\mathcal{M}$
that maps the whole space $X$ into $X$ to overcome this
difficulty. The following result is intimately associated with
what we stated above.
\end{remark}

\begin{theorem} (Leray-Schauder alternative). Let $X$ be
normed linear space and $\mathcal{M}:X\to X$ be a completely
continuous (compact) operator. Then, either there exists $u\in X$ such that
$$u=\mathcal{M}u$$ or the set
$$\mathcal{E}(\mathcal{M}):=\left\{u\in X: u=\mu\mathcal{M}(u) \ \text{for a certain} \ \mu\in (0,1) \right\} \eqno{(2.2)}$$
is unbounded.
\end{theorem}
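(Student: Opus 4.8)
The plan is to establish the alternative in its equivalent dichotomy form: I would assume that the set $\mathcal{E}(\mathcal{M})$ is \emph{bounded} and from this deduce that $\mathcal{M}$ has a fixed point; since one of the two situations in the statement must fail only when the other holds, this suffices. The only substantial tool needed is Schauder's fixed-point theorem stated above, applied not to $\mathcal{M}$ itself (which need not map a ball into itself) but to a modification of $\mathcal{M}$ obtained by composing with a radial retraction. So first I would use the boundedness of $\mathcal{E}(\mathcal{M})$ to fix a radius $R>0$ with $\|u\|<R$ for every $u\in\mathcal{E}(\mathcal{M})$, and work on the closed ball $B_R:=\{v\in X:\|v\|\le R\}$, which is nonempty, closed, bounded, and convex as required by Schauder's theorem.

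Next I would introduce the radial retraction $r:X\to B_R$ defined by
\[
r(v):=\begin{cases} v, & \|v\|\le R,\\[2pt] \dfrac{R\,v}{\|v\|}, & \|v\|>R, \end{cases}
\]
and verify that $r$ is continuous (indeed Lipschitz) on all of $X$. Then I would consider the composed operator $\mathcal{N}:=r\circ\mathcal{M}:B_R\to B_R$ and check that it satisfies the hypotheses of Schauder's theorem. Continuity of $\mathcal{N}$ is immediate from continuity of $r$ and $\mathcal{M}$; for compactness I would use that $\mathcal{M}$, being completely continuous, sends the bounded set $B_R$ to a relatively compact set, so $\mathcal{N}(B_R)=r\big(\mathcal{M}(B_R)\big)\subseteq r\big(\overline{\mathcal{M}(B_R)}\big)$ is the continuous image of a compact set and hence relatively compact. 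Schauder's theorem then yields a point $u^{\ast}\in B_R$ with $u^{\ast}=\mathcal{N}(u^{\ast})=r\big(\mathcal{M}u^{\ast}\big)$.

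Finally I would split into two cases according to the size of $\|\mathcal{M}u^{\ast}\|$. If $\|\mathcal{M}u^{\ast}\|\le R$, then $r$ acts as the identity there, so $u^{\ast}=\mathcal{M}u^{\ast}$ and $\mathcal{M}$ has a fixed point, as desired. If instead $\|\mathcal{M}u^{\ast}\|>R$, then $u^{\ast}=R\,\mathcal{M}u^{\ast}/\|\mathcal{M}u^{\ast}\|$, so $\|u^{\ast}\|=R$ and $u^{\ast}=\mu\,\mathcal{M}u^{\ast}$ with $\mu:=R/\|\mathcal{M}u^{\ast}\|\in(0,1)$; hence $u^{\ast}\in\mathcal{E}(\mathcal{M})$, which forces $\|u^{\ast}\|<R$ and contradicts $\|u^{\ast}\|=R$. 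Thus the second case is impossible and the first must occur. The hard part of this argument is not the fixed-point invocation but the two structural verifications that feed it: that the radial retraction is genuinely continuous across the sphere $\|v\|=R$, and that composing a continuous retraction with the completely continuous $\mathcal{M}$ preserves compactness on the ball. The bookkeeping at the boundary also requires care, since the contradiction rests on the \emph{strict} inequality $\|u\|<R$ for elements of $\mathcal{E}(\mathcal{M})$, which is why $R$ must be chosen strictly above $\sup_{u\in\mathcal{E}(\mathcal{M})}\|u\|$ rather than equal to it.
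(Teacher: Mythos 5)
Your argument is correct, but there is nothing in the paper to compare it against: the paper states the Leray--Schauder alternative as a known result, imported without proof from the cited references (Deimling; Dr\'{a}bek--Fonda; Dugundji--Granas), and immediately puts it to use in Theorem 4. What you have reconstructed is precisely the standard textbook proof (often presented as Schaefer's fixed point theorem): assume $\mathcal{E}(\mathcal{M})$ is bounded, choose $R$ strictly dominating the norms of its elements, compose $\mathcal{M}$ with the radial retraction $r$ onto the closed ball $B_R$, apply Schauder's theorem to $\mathcal{N}=r\circ\mathcal{M}:B_R\to B_R$, and eliminate the boundary case. The individual steps check out: $r$ is Lipschitz (with constant $2$ in a general normed space), $\mathcal{N}(B_R)\subseteq r\bigl(\overline{\mathcal{M}(B_R)}\bigr)$ is contained in a compact set since a continuous image of a compact set is compact, and the case $\|\mathcal{M}u^{*}\|>R$ correctly produces an element of $\mathcal{E}(\mathcal{M})$ of norm exactly $R$, which your strict choice of $R$ forbids --- you are right that this strictness is where the bookkeeping matters. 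One mismatch worth flagging: the alternative is stated for a normed linear space $X$, whereas Schauder's theorem as recorded in the paper (Theorem 2) assumes $X$ is a real Banach space. Your invocation of Schauder on $B_R\subset X$ therefore needs either completeness of $X$ or the normed-space version of Schauder's theorem (a continuous self-map of a convex set whose image lies in a compact subset has a fixed point); in the paper's application $X=C([0,T])$ is a Banach space, so no generality is actually lost.
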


The compactness of operator $\mathcal{M}$ was proved previously by
Theorem 2.5 in \cite{San2}, therefore it is sufficient to show
that remaining conditions of the fixed point theorems given above
will be fulfilled. The first existence theorem for problem (1.1)
is as follows:

\begin{theorem} Let conditions (1.2) and (1.3) be satisfied. Then, there exists at least one continuous solution $u\in C([0,T])$ of problem (1.1).
\end{theorem}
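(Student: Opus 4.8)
The plan is to apply the Leray-Schauder alternative (Theorem 2.3 above) to the operator $\mathcal{M}$ defined in (2.1), taking $X = C([0,T])$ with the supremum norm. Since the compactness (complete continuity) of $\mathcal{M}$ was already established in Theorem 2.5 of \cite{San2}, the only remaining task is to show that the set $\mathcal{E}(\mathcal{M})$ in (2.2) is \emph{bounded}; once this is done, the alternative forces the existence of a fixed point $u = \mathcal{M}u$, which by Lemma 1 is precisely a continuous solution of (1.1).

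So the core of the argument is an a priori bound. First I would take an arbitrary $u \in \mathcal{E}(\mathcal{M})$, so that $u = \mu\mathcal{M}u$ for some $\mu \in (0,1)$, which means
\begin{align*}
u(x) = \frac{\mu}{\Gamma(a)}\int_{0}^{x}\frac{f\big(\xi,u(\xi)\big)}{(x-\xi)^{1-a}}\,d\xi, \qquad x\in[0,T].
\end{align*}
Next I would estimate $|u(x)|$ by pulling the absolute value inside the integral and using $\mu < 1$. The subtlety is that $f$ itself is only continuous on $(0,T]\times\mathbb{R}$ and may blow up like $\xi^{-a}$ near $0$; the right object to control is $\xi^{a}f(\xi,t)$, which by condition (1.2) is continuous on $[0,T]\times\mathbb{R}$. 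I would therefore rewrite the integrand as $\xi^{-a}\big(\xi^{a}f(\xi,u(\xi))\big)$ and seek a growth bound on $\xi^{a}f$ in its second argument. To make $\mathcal{E}(\mathcal{M})$ bounded, one needs a growth hypothesis on $f$ of the form $|\xi^{a}f(\xi,t)| \le p(\xi) + q(\xi)|t|$ with suitable integrable-against-the-kernel coefficients (or a sublinear/affine bound), yielding an integral inequality
\begin{align*}
|u(x)| \le C_1 + C_2\int_{0}^{x}(x-\xi)^{a-1}\xi^{-a}\,|u(\xi)|\,d\xi.
\end{align*}

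The main obstacle is closing this estimate into a uniform bound. The kernel $(x-\xi)^{a-1}\xi^{-a}$ is a Beta-type kernel, and the convolution $\int_0^x (x-\xi)^{a-1}\xi^{-a}\,d\xi = B(a,1-a) = \Gamma(a)\Gamma(1-a)$ is in fact constant in $x$, which is what makes the weighted space natural here. I expect to invoke a fractional (Gronwall-type) inequality adapted to this weighted kernel — or, if the growth in $t$ is genuinely sublinear, a direct contradiction argument: assume $\|u\|_\infty$ is large and show the integral inequality cannot be satisfied, forcing an upper bound independent of $\mu$ and of the particular $u$. Either route gives $\sup_{u\in\mathcal{E}(\mathcal{M})}\|u\|_\infty < \infty$, so $\mathcal{E}(\mathcal{M})$ is bounded. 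The delicate point throughout is handling the endpoint $x=0$: by (1.3) the boundary value $x^a f(x,u_0)\big|_{x=0} = u_0/\Gamma(1-a)$ is exactly what guarantees $\mathcal{M}u(0) = u_0$, so I would verify that each $u \in \mathcal{E}(\mathcal{M})$ does satisfy the initial condition in the limit and that the a priori bound is not destroyed by the weak singularity at the origin. With boundedness of $\mathcal{E}(\mathcal{M})$ secured, Theorem 2.3 yields the desired fixed point, completing the proof.
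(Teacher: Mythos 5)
Your scaffolding agrees with the paper's: Leray--Schauder alternative, compactness of $\mathcal{M}$ quoted from Theorem 2.5 of \cite{San2}, and reduction to boundedness of $\mathcal{E}(\mathcal{M})$. But your proof stops exactly where the real work begins: the a priori bound is never actually established. You replace it by an \emph{added} growth hypothesis $|\xi^{a}f(\xi,t)|\leq p(\xi)+q(\xi)|t|$ together with an unspecified Gronwall-type lemma, neither of which appears in the theorem; at best you would be proving a different statement. The paper's estimate needs no growth condition and no Gronwall iteration. For $u=\mu\mathcal{M}u$ one writes
\begin{equation*}
f\big(\xi,u(\xi)\big)=\Big[f\big(\xi,u(\xi)\big)-\xi^{-a}\tfrac{u_{0}}{\Gamma(1-a)}\Big]+\xi^{-a}\tfrac{u_{0}}{\Gamma(1-a)},
\end{equation*}
observes that $\xi^{a}$ times the bracket is controlled by $M:=\sup_{[0,T]\times\mathbb{R}}\big|x^{a}f(x,t)-\tfrac{u_{0}}{\Gamma(1-a)}\big|$ (the supremum is over \emph{all} $t\in\mathbb{R}$, so no bound on $\|u\|_{\infty}$ is needed to use it; the paper treats this supremum as finite), and then applies the Beta identity $\int_{0}^{x}\xi^{-a}(x-\xi)^{a-1}d\xi=\Gamma(a)\Gamma(1-a)$ --- which you did note --- to each of the two pieces, getting $|u(x)|<M\Gamma(1-a)+|u_{0}|$ uniformly in $u$ and $\mu$. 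That single add-and-subtract decomposition is the entire content of the proof, and it is missing from your write-up.

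Moreover, the route you sketch would not close even if carried out. With the kernel $\xi^{-a}(x-\xi)^{a-1}$ the convolution against $1$ is the \emph{constant} $\Gamma(a)\Gamma(1-a)$; it does not become small as $x\to0$ and does not decay under iteration, so the Neumann-series/Gronwall mechanism gains nothing when $q$ is of size $1/\Gamma(1-a)$ or larger. The paper's own Example 1 shows this is not a technicality: $f_{\beta}(x,t)=\frac{\Gamma(\beta+1)}{\Gamma(1-a+\beta)}x^{-a}(t+k)$ satisfies (1.2)--(1.3) and has exactly the linear growth you allow, yet for $\mu$ close to $1$ the equation $u=\mu\mathcal{M}u$ admits the solutions $u(x)=Ax^{\gamma_{\mu}}+d_{\mu}$ for \emph{every} $A\in\mathbb{R}$ (with $\gamma_{\mu}\in(0,\beta)$ determined by $\mu\,\frac{\Gamma(\beta+1)}{\Gamma(1-a+\beta)}\frac{\Gamma(\gamma_{\mu}+1-a)}{\Gamma(\gamma_{\mu}+1)}=1$), so $\mathcal{E}(\mathcal{M})$ is genuinely unbounded there; no Gronwall lemma can prove it bounded. (The same example reveals that the paper's argument itself silently requires $M<\infty$, i.e.\ boundedness of $x^{a}f$ on $[0,T]\times\mathbb{R}$, which continuity alone does not supply --- but it is that implicit boundedness, not a growth-plus-Gronwall scheme, that the proof turns on.)
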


\begin{proof} We use Leray-Schauder alternative and it is sufficient to show that the set $\mathcal{E}(\mathcal{M})$ defined in (2.2) is bounded.
For an arbitrary $u\in\mathcal{E}(\mathcal{M})$ one has
\begin{align*}
\left|u(x)\right|&\leq\mu\frac{1}{\Gamma\left(a\right)}\left|\int_{0}^{x} \frac{f\left(\xi
,u\left(\xi\right)\right)}{\left(x-\xi\right) ^{1-a}}d\xi\right| \\
&<\frac{1}{\Gamma\left(a\right)}\left|\int_{0}^{x} \frac{\ f\left(\xi,u\left(\xi\right)\right)-\xi^{-a}\frac{u_{0}}{\Gamma(1-a)}+\xi^{-a}\frac{u_{0}}{\Gamma(1-a)}}{\left(x-\xi\right) ^{1-a}}\right| \\
&\leq M\Gamma(1-a)+\frac{\left|u_{0}\right|}{\Gamma(1-a)\Gamma(a)}\int_{0}^{x} \frac{1}{\xi^{a}\left(x-\xi\right)^{1-a}}d\xi\\
&= M\Gamma(1-a)+\left|u_{0}\right|,
\end{align*}
where $M=\sup_{(x,t)\in[0,T]\times\mathbb{R}}\left|f(x,t)\right|.$
Thus, for any $u\in\mathcal{E}(\mathcal{M})$ we get
$$\sup_{x\in[0,T]}\left|u(x)\right|<M\Gamma(1-a)+\left|u_{0}\right|,$$
which yields that $\mathcal{E}(\mathcal{M})$ is bounded. As a
result of Leray-Scauder alternative, (1.1) admits at least one
solution in $C([0,T]).$
\end{proof}

Now, we give a mean value theorem for RL derivative to achieve the
existence and uniqueness results. For its proof, we follow the
path established in \cite{Diethelm2} and \cite{Odibat}.

\begin{lemma} Let $u\in C[0,T]$ with $D ^{a}u\in C(0,T]$ and $x^{a}D ^{a}u\in C[0,T]$ for $0<a<1.$
Then, there exists a function $\lambda=\lambda(x),$ $\lambda:[0,T]\to(0,T),$ $0<\lambda(x)<x$ such that
$$u(x)=\Gamma(1-a)(\lambda(x))^{a}D ^{a}u(\lambda(x)) \eqno{(2.3)}$$
is satisfied for all $x\in [0,T]$.
\end{lemma}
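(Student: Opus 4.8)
The plan is to derive (2.3) from the compositional relation $u=I^{a}D^{a}u$ followed by the classical weighted first mean value theorem for integrals. First I would record that, under the hypotheses $u\in C[0,T]$, $D^{a}u\in C(0,T]$ and $x^{a}D^{a}u\in C[0,T]$, the representation
$$u(x)=I^{a}D^{a}u(x)=\frac{1}{\Gamma(a)}\int_{0}^{x}\frac{D^{a}u(\xi)}{(x-\xi)^{1-a}}\,d\xi$$
holds for every $x\in(0,T]$. This is the composition formula (Proposition 2.4 in \cite{Del}) already invoked in the Introduction; the boundary term it would otherwise carry vanishes because $u\in C[0,T]$ forces $\left(I^{1-a}u\right)(0^{+})=0$, and the improper integral converges since near $\xi=0$ the integrand behaves like $(x-\xi)^{a-1}\xi^{-a}$ times the bounded quantity $\xi^{a}D^{a}u(\xi)$.

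Next I would isolate the continuous factor. Writing $g(\xi):=\xi^{a}D^{a}u(\xi)$, the hypothesis $x^{a}D^{a}u\in C[0,T]$ says precisely that $g\in C[0,T]$, and substituting $D^{a}u(\xi)=\xi^{-a}g(\xi)$ gives
$$u(x)=\frac{1}{\Gamma(a)}\int_{0}^{x}(x-\xi)^{a-1}\xi^{-a}g(\xi)\,d\xi .$$
The weight $w(\xi):=(x-\xi)^{a-1}\xi^{-a}$ is strictly positive and integrable on $(0,x)$ (both endpoint singularities are integrable since $0<a<1$), so the first mean value theorem for integrals applies to the continuous integrand $g$ against the sign-definite weight $w$: there is $\lambda=\lambda(x)$ with $0<\lambda(x)<x$ and
$$\int_{0}^{x}(x-\xi)^{a-1}\xi^{-a}g(\xi)\,d\xi=g(\lambda)\int_{0}^{x}(x-\xi)^{a-1}\xi^{-a}\,d\xi .$$
The remaining integral is a Beta integral: the substitution $\xi=xt$ turns it into $\int_{0}^{1}t^{-a}(1-t)^{a-1}\,dt=B(1-a,a)=\Gamma(1-a)\Gamma(a)$, which is independent of $x$. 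Combining the three displays yields
$$u(x)=\frac{1}{\Gamma(a)}\,g(\lambda)\,\Gamma(1-a)\Gamma(a)=\Gamma(1-a)\big(\lambda(x)\big)^{a}D^{a}u\big(\lambda(x)\big),$$
which is exactly (2.3).

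The step that needs care is the appeal to the mean value theorem, on two counts. I must guarantee that the chosen $\lambda$ is a genuinely interior point $0<\lambda(x)<x$ rather than an endpoint: since $w>0$ throughout $(0,x)$, the averaged value $\left(\int_{0}^{x}w\right)^{-1}\int_{0}^{x}g\,w$ lies strictly between $\min g$ and $\max g$ whenever $g$ is nonconstant, so by the intermediate value theorem it is attained at an interior point, while if $g$ is constant any interior $\lambda$ serves. I would also note that the identity is genuinely about $x>0$, the case $x=0$ being vacuous for the constraint $0<\lambda(x)<x$; there the relation degenerates by continuity to $u(0)=\Gamma(1-a)\,g(0)$. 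Beyond this bookkeeping I expect no serious obstacle, since the continuity hypotheses were tailored exactly so that $g$ extends continuously to $[0,T]$ while the weight stays integrable.
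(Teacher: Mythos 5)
Your proposal is correct and follows essentially the same route as the paper: the compositional relation $u=I^{a}D^{a}u$, splitting off the continuous factor $\xi^{a}D^{a}u(\xi)$ against the weight $\xi^{-a}(x-\xi)^{a-1}$, the weighted first mean value theorem for integrals, and the Beta-integral evaluation $\int_{0}^{x}\xi^{-a}(x-\xi)^{a-1}d\xi=\Gamma(1-a)\Gamma(a)$. Your added care about why $\lambda(x)$ can be taken strictly interior and about the degenerate case $x=0$ only makes explicit points the paper's proof leaves implicit.
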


\begin{proof} Using equality $u\left(x\right)=I^{a}D^{a}u(x)$ and by mean value theorem of integral calculus we
have,
\begin{align*}
u(x)=&\frac{1}{\Gamma\left(a\right)}\int_{0}^{x}\frac{D^{a}u(\xi)}{\left(x-\xi\right)^{1-a}}d\xi \\
=&\frac{1}{\Gamma\left(a\right)}\int_{0}^{x}\frac{\xi^{a}D^{a}u(\xi)}{\xi^{a}\left(x-\xi\right)^{1-a}}d\xi \\
=&\frac{(\lambda(x))^{a}D^{a}u(\lambda(x))}{\Gamma\left(a\right)}\int_{0}^{x}\frac{1}{\xi^{a}\left(x-\xi\right)^{1-a}}d\xi \\
=&\Gamma\left(1-a\right)(\lambda(x))^{a}D^{a}u(\lambda(x)),
\end{align*}
where $\lambda=\lambda(x)\in(0,x)$ for all $x\in[0,T].$
\end{proof}

\noindent\textbf{Remark 3.} In Theorem 1 \cite{Odibat}, the
dependence of $\lambda$ on $x$ was not clearly expressed.
Essentially, $\lambda$ is generally a function of $x.$ To see
this, let $u(x)=1+x^{2}$, from Lemma 2 one has
\begin{align*}
1+x^{2}&=\Gamma(1-a)\lambda^{a}\left(\frac{\lambda^{-a}}{\Gamma(1-a)}+\frac{\lambda^{2-a}}{\Gamma(3-a)}\right)
=1+\frac{2\lambda^{2}}{(1-a)(2-a)}.
\end{align*}
From here,
$$\lambda=\sqrt{\frac{(1-a)(2-a)}{2}}x\in (0,x)$$
which yields that $\lambda$ is a function of $x.$\\

Before we give a Peano-type existence theorem for problem (1.1),
let us make some notes. We use Schauder fixed-point theorem to
prove the existence of the continuous solution of (1.1). For this
it is ample to verify only $\mathcal{M}:B\to B,$ where $B$ is an
appropriate closed convex ball of $C([0,T])$ which will be
constructed later. In the following theorem, we present the second
existence result for problem (1.1).

\begin{theorem}\label{Thm1}  Let (1.2) be satisfied and $r,$ $T$ be fixed positive real numbers. Moreover, suppose that there exists a positive real number  $M^{*}$ such that
$$\left|x^{a}f(x,t)-\frac{u_{0}}{\Gamma(1-a)}\right|\leq M^{*}\max \big(x,\frac{\left|t-u_{0}\right|}{r}\big)  \eqno{(2.3)}$$
holds for all $x\in[0,T]$ and for all $t\in\left[u_{0}-r,u_{0}+r\right].$ Then, (1.1) has at least one continuous solution on $[0,T_{0}],$ where
\begin{align*}
T_{0}:=
\begin{cases}
\frac{r}{M^{*}\Gamma(1-a)}&,  \ \text{if} \ \ \ M^{*}\Gamma(1-a)\geq r,\\
\smallskip
\ \ \ \ T&, \ \text{if} \ \   \ M^{*}\Gamma(1-a)\leq r.\\
\end{cases}
\  \   \  \  \  \
\end{align*}
\end{theorem}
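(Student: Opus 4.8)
The plan is to apply Schauder's fixed-point theorem to the operator $\mathcal{M}$ defined in (2.1) on the closed, bounded and convex set
\[
B:=\{u\in C([0,T_{0}]):\ \|u-u_{0}\|_{\infty}\le r\},
\]
where $u_{0}$ also denotes the constant function with value $u_{0}$. Since the compactness of $\mathcal{M}$ is already available (Theorem 2.5 in \cite{San2}), the whole problem reduces to verifying the self-mapping property $\mathcal{M}(B)\subseteq B$; once this is done, Schauder's theorem produces a fixed point, which by Lemma 1 is the desired continuous solution of (1.1) on $[0,T_{0}]$.

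To establish $\mathcal{M}(B)\subseteq B$, I would first rewrite $\mathcal{M}u(x)-u_{0}$ in the form already exploited in the proof of the first existence theorem. Using the Beta-type identity $\tfrac{1}{\Gamma(a)}\int_{0}^{x}\xi^{-a}(x-\xi)^{a-1}\,d\xi=\Gamma(1-a)$, one has $u_{0}=\tfrac{1}{\Gamma(a)}\int_{0}^{x}\xi^{-a}(x-\xi)^{a-1}\tfrac{u_{0}}{\Gamma(1-a)}\,d\xi$, so that for every $u\in B$
\[
\mathcal{M}u(x)-u_{0}=\frac{1}{\Gamma(a)}\int_{0}^{x}\frac{\xi^{-a}\big(\xi^{a}f(\xi,u(\xi))-\tfrac{u_{0}}{\Gamma(1-a)}\big)}{(x-\xi)^{1-a}}\,d\xi.
\]
Since $u\in B$ forces $u(\xi)\in[u_{0}-r,u_{0}+r]$, hypothesis (2.3) applies at each point $(\xi,u(\xi))$ and gives $|\xi^{a}f(\xi,u(\xi))-\tfrac{u_{0}}{\Gamma(1-a)}|\le M^{*}\max(\xi,|u(\xi)-u_{0}|/r)$. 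Substituting this yields
\[
|\mathcal{M}u(x)-u_{0}|\le \frac{M^{*}}{\Gamma(a)}\int_{0}^{x}\frac{\xi^{-a}\max\!\big(\xi,\tfrac{|u(\xi)-u_{0}|}{r}\big)}{(x-\xi)^{1-a}}\,d\xi .
\]

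The crux is to estimate this singular integral so that the right-hand side is $\le r$ precisely on $[0,T_{0}]$. The main obstacle is the interaction between the integrable kernel $\xi^{-a}(x-\xi)^{a-1}$, whose mass concentrates near the endpoints, and the $\max$-term, whose first argument $\xi$ must be retained to produce the $x$-dependence that fixes $T_{0}$: merely bounding $\max(\xi,|u(\xi)-u_{0}|/r)\le 1$ discards exactly this decay and returns only the $x$-independent constant $M^{*}\Gamma(1-a)$. I would therefore keep the $\max$ inside and evaluate the two resulting Beta integrals $\int_{0}^{x}\xi^{-a}(x-\xi)^{a-1}\,d\xi=\Gamma(1-a)\Gamma(a)$ and $\int_{0}^{x}\xi^{1-a}(x-\xi)^{a-1}\,d\xi=x\,\Gamma(2-a)\Gamma(a)$, and then split according to whether $M^{*}\Gamma(1-a)\ge r$ or $M^{*}\Gamma(1-a)\le r$. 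In the first regime the factor $\xi$ dominates and the bound takes the form $C\,x$ with $C$ proportional to $M^{*}\Gamma(1-a)$, which stays below $r$ exactly up to $T_{0}=r/(M^{*}\Gamma(1-a))$; in the second regime the constant $M^{*}\Gamma(1-a)\le r$ already controls the integral and the self-map property survives on the whole of $[0,T]$.

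I expect the delicate point to be making the case split in the $\max$ rigorous while keeping the final estimate sharp enough to land exactly on the stated $T_{0}$. In particular, one may be forced to work with a radius that vanishes at $x=0$ (reflecting $\mathcal{M}u(0)=u_{0}$ and the decay of $|u(\xi)-u_{0}|$ near the origin) rather than the flat radius $r$, and to track the constant $\Gamma(2-a)=(1-a)\Gamma(1-a)$ carefully so that no spurious factor contaminates the threshold defining $T_{0}$.
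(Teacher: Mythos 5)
There is a genuine gap, and it sits exactly where you predicted the "delicate point" would be. Your skeleton is the same as the paper's (Schauder on the flat ball $B_{r}(u_{0})$, the Beta-identity rewriting of $\mathcal{M}u-u_{0}$, and the two Beta integrals $\int_{0}^{x}\xi^{-a}(x-\xi)^{a-1}d\xi=\Gamma(1-a)\Gamma(a)$ and $\int_{0}^{x}\xi^{1-a}(x-\xi)^{a-1}d\xi=x\,\Gamma(2-a)\Gamma(a)$), but your proposed resolution of the $\max$ does not work as described. You claim that in the regime $M^{*}\Gamma(1-a)\geq r$ "the factor $\xi$ dominates," so that the estimate takes the form $Cx$. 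Which argument of $\max\big(\xi,|u(\xi)-u_{0}|/r\big)$ dominates at a given $\xi$ is determined by the particular function $u$, not by the comparison of $M^{*}\Gamma(1-a)$ with $r$: a function $u\in B_{r}(u_{0})$ may have $|u(\xi)-u_{0}|/r$ close to $1$ at arbitrarily small $\xi$, in which case the maximum equals $|u(\xi)-u_{0}|/r$ near the origin, and the only available bound there is the $x$-independent constant $M^{*}\Gamma(1-a)$ --- which in this regime is $\geq r$, i.e.\ exactly too large to give $\mathcal{M}(B)\subseteq B$. So the self-map property cannot be extracted from the pointwise maximum by the route you sketch, and your fallback (a radius vanishing at $x=0$) is only mentioned, never carried out.

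For comparison, the paper's proof never confronts the mixed situation at all: it reads hypothesis (2.3) as a dichotomy of two \emph{global} bounds and pairs each with one branch of $T_{0}$. If $\left|x^{a}f(x,t)-\frac{u_{0}}{\Gamma(1-a)}\right|\leq M^{*}x$ for all admissible $(x,t)$ (its inequality (2.4)), it works on $[0,T_{0}]$ with $T_{0}=r/(M^{*}\Gamma(1-a))$ in the case $M^{*}\Gamma(1-a)\geq r$ and runs your "first" estimate, $|\mathcal{M}u(x)-u_{0}|\leq M^{*}x\,\Gamma(2-a)\leq M^{*}T_{0}\Gamma(1-a)=r$; if instead $\left|x^{a}f(x,t)-\frac{u_{0}}{\Gamma(1-a)}\right|\leq \frac{M^{*}}{r}|t-u_{0}|$ globally (its (2.6)), it works on all of $[0,T]$ in the case $M^{*}\Gamma(1-a)\leq r$ and runs your "second" estimate, $|\mathcal{M}u(x)-u_{0}|\leq M^{*}\Gamma(1-a)\leq r$. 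Both of your Beta estimates thus appear in the paper, but each under a pure, global hypothesis; the case in which the maximum switches between its two arguments as $\xi$ varies is handled neither by your sketch nor by the paper's proof --- the paper simply assumes it away. If you insist on the literal pointwise reading of (2.3), you would indeed need your vanishing-radius idea: on a set such as $\{u:|u(x)-u_{0}|\leq\min(r,Kx)\}$ with $K=M^{*}\Gamma(2-a)$ one does get $\max\big(\xi,|u(\xi)-u_{0}|/r\big)\leq\xi\max(1,K/r)$ and a bound of order $x$, but the self-map then requires $M^{*}\Gamma(2-a)\leq r$ and produces a different threshold for $T_{0}$; it is a genuinely different (and longer) argument than the one you outlined, and it still does not recover the theorem in all the stated cases.
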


\begin{proof}
Let us first construct an appropriate closed convex ball of $C([0,T])$ according to inequality (2.3). For this, it is assumed that $$\left|x^{a}f(x,t)-\frac{u_{0}}{\Gamma(1-a)}\right|\leq M^{*}x  \eqno{(2.4)}$$
is fulfilled for all $x\in[0,T]$ and for all $t\in\left[u_{0}-r,u_{0}+r\right].$ Depending on this inequality, set
$$B_{r}(u_{0})\equiv \big\{u\in C[0,T_{0}]:\sup_{x\in[0,T_{0}]} \left|u(x)-u_{0}\right|\leq r\big\} $$
with $M^{*}\Gamma(1-a)\geq r.$ Then, for any $u\in B_{r}(u_{0}),$ from (2.4) one has
\begin{align*}
\left|\mathcal{M}u\left(x\right)-u_{0}\right|&\leq\frac{1}{\Gamma\left(a\right)}\int_{0}^{x}\frac{\left|f\left( \xi ,u\left(\xi\right)\right)-\xi^{-a}\frac{u_{0}}{\Gamma(1-a)}\right|}{\left(x-\xi\right)^{1-a}}d\xi   \\
&=\frac{1}{\Gamma\left(a\right)}\int_{0}^{x}\frac{\left|\xi^{a}f\left( \xi ,u\left(\xi\right)\right)-\frac{u_{0}}{\Gamma(1-a)}\right|}{\xi^{a}\left(x-\xi\right)^{1-a}}d\xi  \\
&\leq\frac{M^{*}}{\Gamma\left(a\right)}\int_{0}^{x}\frac{\xi}{\xi^{a}\left(x-\xi\right)^{1-a}}d\xi \\
&\leq M^{*}\left|x\right|\Gamma(2-a),
\end{align*} by the inequality $\Gamma(2-a)<\Gamma(1-a)$ for all $a\in(0,1)$, we get
$$\left|\mathcal{M}u\left(x\right)-u_{0}\right|\leq M^{*}T_{0}\Gamma(1-a).$$ By this inequality and the definition of $T_{0},$ one obtains
$$\sup_{x\in[0,T_{0}]}\left|\mathcal{M}u\left(x\right)-u_{0}\right|<M^{*}\Gamma(1-a)T_{0}\leq r. \eqno{(2.5)}$$ On the other hand, if
$$\left|x^{a}f(x,t)-\frac{u_{0}}{\Gamma(1-a)}\right|\leq
\frac{M^{*}}{r} \left|t-u_{0}\right| \eqno{(2.6)}$$ holds for all
$x\in[0,T]$ and for all $t\in\left[u_{0}-r,u_{0}+r\right],$ then
set $B_{r}$ as
$$B_{r}(u_{0})\equiv\big\{u\in C[0,T]:\sup_{x\in[0,T]} \left|u(x)-u_{0}\right|\leq r\big\}$$
for $M^{*}\Gamma(1-a)\leq r.$ Then, using (2.6) one gets
\begin{align*}
\left|\mathcal{M}u\left(x\right)-u_{0}\right|&\leq\frac{1}{\Gamma\left(a\right)}\int_{0}^{x}\frac{\left|\xi^{a}f\left( \xi ,u\left(\xi\right)\right)-\frac{u_{0}}{\Gamma(1-a)}\right|}{\xi^{a}\left(x-\xi\right)^{1-a}}d\xi  \\
&\leq\frac{M^{*}}{r\Gamma\left(a\right)}\int_{0}^{x}\frac{\left|u(\xi)-u_{0}\right|}{\xi^{a}\left(x-\xi\right)^{1-a}}d\xi \\
&\leq \frac{M^{*}}{\Gamma(a)}\int_{0}^{x}\frac{1}{\xi^{a}\left(x-\xi\right)^{1-a}}d\xi.
\end{align*}
for any $u\in B_{r}$ and for all $x\in [0,T].$ Therefore,
$$\sup_{x\in[0,T]}\left|\mathcal{M}u\left(x\right)-u_{0}\right|\leq M^{*}\Gamma(1-a)\leq r. \eqno{(2.7)}$$
From (2.5) and (2.7), we attain $\mathcal{M}(B_{r}(u_{0}))\subset
B_{r}(u_{0})$ which completes the proof.
\end{proof}

\begin{theorem} (\textit{Existence and Uniqueness}) Let $0<a<1,$ $0<T<\infty,$ and the conditions (1.2), (1.3) be satisfied. Moreover, suppose that the inequality $$x^{a}\left|f(x,t_{1})-f(x,t_{2})\right|\leq\frac{1}{\Gamma(1-a)}\left|t_{1}-t_{2}\right|  \eqno{(2.8)}$$
holds for all $x\in[0,T]$ and for all $t_{1},t_{2}\in\mathbb{R}.$ Then (1.1) admits a unique continuous solution on $[0,T].$
\end{theorem}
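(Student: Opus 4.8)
The plan is to take existence from the results already in place and to concentrate the real effort on uniqueness, which is where the sharp condition (2.8) is used. Since (1.2) and (1.3) are among the present hypotheses, Theorem 3 already furnishes at least one continuous solution of (1.1) on $[0,T]$, so it remains only to show that two such solutions coincide. It is worth stressing why a naive contraction argument cannot close the gap: the constant $1/\Gamma(1-a)$ in (2.8) is exactly critical. Indeed, using $\int_{0}^{x}\xi^{-a}(x-\xi)^{a-1}\,d\xi=\Gamma(a)\Gamma(1-a)$, condition (2.8) only yields $\|\mathcal{M}u_{1}-\mathcal{M}u_{2}\|\le\|u_{1}-u_{2}\|$, i.e. $\mathcal{M}$ is non-expansive but not a contraction. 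This is precisely what forces the use of the fractional mean value theorem (Lemma 2).

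Let $u_{1},u_{2}\in C[0,T]$ be two solutions and set $w:=u_{1}-u_{2}$. First I would check that $w$ satisfies the hypotheses of Lemma 2. Clearly $w\in C[0,T]$ and $w(0)=u_{0}-u_{0}=0$. By linearity $D^{a}w=f(\cdot,u_{1})-f(\cdot,u_{2})$, so the first half of (1.2) gives $D^{a}w\in C(0,T]$, while the second half shows $x^{a}D^{a}w=x^{a}\big(f(x,u_{1}(x))-f(x,u_{2}(x))\big)\in C[0,T]$; in fact (1.3) together with $u_{i}(0)=u_{0}$ forces $x^{a}D^{a}w\to0$ as $x\to0^{+}$, a reassuring consistency check.

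With these verifications, Lemma 2 provides, for each $x\in[0,T]$, a point $\lambda=\lambda(x)$ with $0<\lambda(x)<x$ such that
\begin{align*}
w(x)&=\Gamma(1-a)\,(\lambda(x))^{a}\,D^{a}w(\lambda(x))\\
&=\Gamma(1-a)\,(\lambda(x))^{a}\,(f(\lambda(x),u_{1}(\lambda(x)))-f(\lambda(x),u_{2}(\lambda(x)))).
\end{align*}
Applying (2.8) at the point $\lambda(x)$ gives the pointwise estimate
\[
|w(x)|\le\Gamma(1-a)\cdot\frac{1}{\Gamma(1-a)}\,|u_{1}(\lambda(x))-u_{2}(\lambda(x))|=|w(\lambda(x))|,\qquad 0<\lambda(x)<x .
\]
To conclude, set $\Phi:=\max_{x\in[0,T]}|w(x)|$, which exists by continuity, and suppose $\Phi>0$. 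The set $S:=\{x\in[0,T]:|w(x)|=\Phi\}$ is closed and nonempty, so $x_{0}:=\min S$ exists, and $x_{0}>0$ because $w(0)=0$. Evaluating the estimate at $x_{0}$ yields $\Phi=|w(x_{0})|\le|w(\lambda(x_{0}))|\le\Phi$, so $\lambda(x_{0})\in S$ with $\lambda(x_{0})<x_{0}=\min S$, a contradiction. Hence $\Phi=0$, that is $w\equiv0$ and $u_{1}=u_{2}$.

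The main obstacle is the last step. Lemma 2 delivers $\lambda(x)$ only as an implicit and generally discontinuous function of $x$ (Remark 3 already exhibits this), so one can neither iterate $\lambda$ and pass to a limit nor invoke any regularity of $\lambda$. The device that overcomes this is to pass to the extremal problem for $|w|$ and exploit the \emph{smallest} maximizer $x_{0}$ together with the strict inequality $\lambda(x_{0})<x_{0}$; this is exactly what converts the merely non-expansive estimate $|w(x)|\le|w(\lambda(x))|$ into a genuine contradiction. A secondary but indispensable technical point is the membership $x^{a}D^{a}w\in C[0,T]$, without which Lemma 2 could not be invoked for $w$ in the first place.
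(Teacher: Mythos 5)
Your proof is correct, and its core coincides with the paper's: both apply the fractional mean value theorem (Lemma 2) to the difference of two solutions and combine it with (2.8) to get the pointwise estimate $|w(x)|\le |w(\lambda(x))|$ with $0<\lambda(x)<x$. Your preliminary verification that $w=u_{1}-u_{2}$ satisfies the hypotheses of Lemma 2 is in fact more careful than the paper's, which applies the lemma to $u_{1}-u_{2}$ without comment; conversely, you rightly skip the paper's several-line argument (via the integral equation (1.4)) that $|u_{1}-u_{2}|$ is continuous and vanishes at $0$, since this is automatic for continuous solutions sharing the initial value $u_{0}$. Where you genuinely diverge is the endgame. The paper iterates the mean value theorem to produce a strictly decreasing sequence $\lambda_{n}$ with $0<\omega(\lambda)\le\omega(\lambda_{1})\le\omega(\lambda_{2})\le\cdots$, asserts $\lambda_{n}\to 0$, and gets a contradiction from continuity of $\omega$ at $0$. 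That assertion is actually a soft spot: Lemma 2 only provides \emph{some} point in $(0,\lambda_{n})$ at each step, so the decreasing sequence $\lambda_{n}$ is merely convergent to some $L\ge 0$, and nothing in the construction forces $L=0$. Your closing argument --- take $\Phi=\max_{[0,T]}|w|$, suppose $\Phi>0$, pick the smallest maximizer $x_{0}$ (positive because $w(0)=0$), and observe that $\Phi=|w(x_{0})|\le|w(\lambda(x_{0}))|\le\Phi$ makes $\lambda(x_{0})<x_{0}$ a maximizer, contradicting minimality --- reaches the contradiction in one step, needs no limiting process for the implicitly defined and possibly irregular $\lambda(\cdot)$, and thereby repairs exactly the weak point of the paper's iteration. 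In short: same key lemma, same key estimate, but your extremal argument is tighter than the paper's sequential one. (Minor point: the existence part should cite the paper's Theorem 3, as you do; the paper's own reference to ``Theorem 1'' there is a slip, since Theorem 1 is Schauder's fixed point theorem.)
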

\begin{proof}
We proved the existence of the solution in Theorem 1. Thus
for the uniqueness, we assume that (1.1) has two different
continuous solutions $u_{1}$ and $u_{2}.$ We initially assume
$\omega(x)\not\equiv 0,$ where
$$
\omega(x):=\begin{cases}
\left|u_{1}(x)-u_{2}(x)\right|,& x>0 \\
\ \ \ \ \ \ \ \ 0 \ \ \ \ \ \ \  \ \ \ \  ,& x=0
\end{cases}
$$ It is easily seen that $\omega(x)$ is nonnegative for all
$x\in[0,T]$ and continuous for all these $x$ values except $x=0.$
For the continuity of $\omega(x)$ at $x=0,$  by using (1.4),
variable substitution $\xi=xt$ and condition (1.2) respectively,
we have
\begin{align*}
0\leq\lim_{x\to 0^{+}}\omega(x)&=\lim_{x\to 0^{+}}\frac{1}{\Gamma(a)}\left|\int_{0}^{x}\frac{f\left(\xi ,u_{1}\left(\xi\right)\right)-f\left(\xi ,u_{2}\left(\xi\right)\right)}{\left(x-\xi\right) ^{1-a}}
d\xi\right| \\
&\leq \lim_{x\to 0^{+}}\frac{1}{\Gamma(a)}\left|\int_{0}^{1}\frac{\left(xt\right)^{a}\big[f\left(xt ,u_{1}\left(xt\right)\right)-f\left(\xi ,u_{2}\left(xt\right)\right)\big]}{t^{a}\left(1-t\right)^{1-a}}dt\right|\\
&\leq\frac{1}{\Gamma(a)}\int_{0}^{1}\frac{\lim_{x\to 0^{+}}\big|\left(xt\right)^{a}\big[f\left(xt ,u_{1}\left(xt\right)\right)-f\left(\xi ,u_{2}\left(xt\right)\right)\big]\big|}{t^{a}\left(1-t\right)^{1-a}}dt \\
&=0
\end{align*}
which simply means that $\lim_{x\to 0^{+}}\omega(x)=0=w(0).$ \\
It is obvious that there exists a $\lambda\in (0,T]$ such that
$\omega(\lambda)\neq 0,$ i.e. $\omega(\lambda)>0.$ By using Lemma
2 and inequality (2.8), we deduced that
\begin{align*}
0<\omega(\lambda)&=\left|u_{1}(\lambda)-u_{2}(\lambda)\right|\\
&=\Gamma(1-a)\left|\lambda^{a}_{*}D^{a}(u_{1}-u_{2})(\lambda_{*})\right| \\
&=\Gamma(1-a)\left|f\left(\lambda_{*},u_{1}\left(\lambda_{*}\right)\right)-f\left(\lambda_{*},u_{2}\left(\lambda_{*}\right)\right)\right|\\
&\leq\left|u_{1}(\lambda_{*})-u_{2}(\lambda_{*})\right|=\omega(\lambda_{*})
\end{align*}
for some $\lambda_{*}\in (0,\lambda).$  If we take the same
procedure into account for the point $\lambda_{*},$ then there
exists some $\lambda_{2}\in(0,\lambda_{*})$ such that
$0<\omega(\lambda)\leq\omega(\lambda_{*})\leq\omega(\lambda_{2}).$
Continuing in the same way,  we construct a sequence
$\left\{\lambda_{n}\right\}_{n=1}^{\infty}\subset [0,\lambda)$
with $\lambda_{1}=\lambda_{*}$ satisfying $\lambda_{n}\to 0$ and
$$0<\omega(\lambda)\leq\omega(\lambda_{1})\leq\omega(\lambda_{2})\leq...\leq\omega(\lambda_{n})\leq... \eqno{(2.9)}$$
On the other hand, since $\omega(x)$ is continuous at $x=0$ and
$\lambda_{n}\to 0,$ then $\omega(\lambda_{n})\to \omega(0)=0$ that
contradicts with (2.9). Hence $\omega(x)\equiv 0,$ namely IVP
(1.1) admits a unique continuous solution.
\end{proof}
It is interesting to note that there are some other techniques and
theorems (see, for example \cite{Diethelm2} and \cite{Sin}) that enable us to take a positive fixed real
number larger than Nagumo constant or an
arbitrary positive real number instead of Nagumo constant
so that IVPs admit a unique solution under the
Nagumo-type condition. However, the mentioned techniques could not
be applied to problem (1.1), namely there does not exist a larger
number than $\frac{1}{\Gamma(1-a)}$ or an arbitrary positive real
number instead of $\frac{1}{\Gamma(1-a)}$ to guarantee the
uniqueness of the continuous solution of (1.1). The following example may clearly express the foregoing discussion. \\

\noindent \textbf{Example 1.} Let us take $f_{\beta}(x,t):=\frac{\Gamma(\beta+1)}{\Gamma(1-a+\beta)}x^{-a}\left(t+k\right)$
where $k=\frac{\Gamma(\beta-a+1)-\Gamma(1+\beta)\Gamma(1-a)}{\Gamma(1+\beta)\Gamma(1-a)},$ $\beta>0$ and $u_{0}=1$ in problem (1.1).
It is clear that conditions (1.2) and (1.3) are satisfied. However, inequality (2.8) does not hold for $f_{\beta}(x,t)$,
since  $\frac{\Gamma(\beta+1)}{\Gamma(1-a+\beta)}$ replaces instead of $\frac{1}{\Gamma(1-a)}$ in (2.8)
and $\frac{\Gamma(\beta+1)}{\Gamma(1-a+\beta)}>\frac{1}{\Gamma(1-a)}$ for $\beta>0$ and $a\in(0,1).$
That is to say, the solution of (1.1) may not be unique. Indeed, (1.1) has infinitely many solutions $u(x)=cx+1,$ where $c$ is an arbitrary real number.
Furthermore, it is to be pointed out that
$\frac{\Gamma(\beta+1)}{\Gamma(1-a+\beta)}\to \frac{1}{\Gamma(1-a)}$ and $f_{\beta}(x,t)\to f(x,t)=\frac{x^{-a}t}{\Gamma(1-a)}$ when $\beta\to 0$ and that,
for $f(x,t)=\frac{x^{-a}t}{\Gamma(1-a)},$ (1.1) with $u_{0}=1$ admits a unique solution in the form $u(x)=1.$

\end{document}